\documentclass[preprint,11pt]{elsarticle}

\usepackage{amsfonts}
\usepackage{amsmath}
\usepackage{amsthm}
\usepackage{multirow}
\usepackage{bm}

\usepackage{graphicx}
\usepackage{caption,subcaption}
\usepackage{ctable}

\DeclareMathOperator{\ddiv}{div}
\newtheorem{thm}{Theorem}

\newtheorem{remark}{Remark}

\providecommand{\norm}[1]{\lVert#1\rVert}

%\usepackage{lineno,hyperref}
%\modulolinenumbers[5]
\newcommand{\uu}{\bm u}
\newcommand{\qq}{\bm q}

%% `Elsevier LaTeX' style
\bibliographystyle{elsarticle-num}
%%%%%%%%%%%%%%%%%%%%%%%

\begin{document}

\begin{frontmatter}

\title{A parallel-in-time fixed-stress splitting method for Biot's consolidation model}

%% Group authors per affiliation:
\author[Bergen_address]{Manuel Borregales\corref{mycorrespondingauthor}}\ead{Manuel.Borregales@uib.no}	
\author[Bergen_address]{Kundan Kumar}\ead{Kundan.Kumar@uib.no}
\author[Bergen_address]{Florin Adrian Radu}\ead{Florin.Radu@uib.no}
\author[UZ_address]{Carmen Rodrigo}\ead{carmenr@unizar.es}
\author[CWI_address]{Francisco Jos\'e Gaspar}\ead{F.J.Gaspar@cwi.nl}

\address[Bergen_address]{Department of Mathematics, University of Bergen, All\'egaten 41, 50520 Bergen, Norway}
\address[CWI_address]{CWI, Centrum Wiskunde \& Informatica, Science Park 123, P.O. Box 94079, 1090 Amsterdam, The Netherlands}
\address[UZ_address]{IUMA and Department of Applied Mathematics, University of Zaragoza, Mar\'ia de Luna, 3, 50018 Zaragoza, Spain}

\cortext[mycorrespondingauthor]{Corresponding author}

\begin{abstract}
In this work, we study the parallel-in-time iterative solution of coupled flow and geomechanics in porous media, modelled by a two-field formulation of the Biot's equations. In particular, we propose a new version of the fixed stress splitting method, which has been widely used as solution method of these problems. This new approach forgets about the sequential nature of the temporal variable and considers the time direction as a further direction for parallelization. We present a rigorous convergence analysis of the method and a numerical experiment to demonstrate the robust behaviour of the algorithm.
\end{abstract}

\begin{keyword}
Iterative fixed-stress split scheme \sep parallel-in-time \sep Biot's model 
\MSC[2010] 00-01\sep  99-00
\end{keyword}

\end{frontmatter}

%\linenumbers

\section{Introduction}

The coupled poroelastic equations describe the behaviour of fluid-saturated porous materials
undergoing deformation. 
Such coupling has been intensively investigated, starting from the pioneering one-dimensional work of Terzaghi \cite{terzaghi}, which was extended to a more general three-dimensional theory by Biot \cite{Biot1, Biot2}. Biot's model was originally developed to study geophysical applications such as reservoir geomechanics, however, nowadays it is widely used in the modeling of many applications in a great variety of fields, ranging
from geomechanics and petroleum engineering, to biomechanics or food processing.
There is a vast literature on Biot's equations and their existence, uniqueness, and regularity, see Showalter \cite{showalter}, Phillips and Wheeler \cite{phillips1} and the references therein.

%In recent years, intensive research has been focused on the design of efficient methods
%for solving the large linear systems arising from the discretization of the Biot's model, since 
%in real simulations it is the most consuming part. 
Reliable numerical methods for solving poroelastic problems are needed for the accurate
solution of multi-physics phenomena appearing in different application areas. In particular, the solution of the large linear systems of equations arising from the discretization of Biot's model is the most consuming part when real simulations are performed. For this reason, a lot of effort has been made in the last years to design efficient solution methods for these problems. 
%When using an implicit time-stepping discretization, a key point to take into account is the fact that the resulting system matrix on each time step has a saddle point problem character, requiring specific solvers. \\
Two different approaches can be adopted, the so-called monolithic or fully coupled methods and the iterative
coupling methods. The monolithic approach consists of solving the linear system simultaneously for
all the unknowns. The challenge here, is the design of efficient preconditioners to accelerate the convergence of Krylov subspace methods and the design of efficient smoothers in a multigrid framework. Recent advances in both 
directions can be found in \cite{Bergamaschi2007, Ferronato2010, Gaspar2004, Luo2017} and the references therein. These methods usually provide unconditional stability and convergence.
Iterative coupling methods, however, solve sequentially  the equations
for fluid flow and geomechanics, at each time step, until a converged solution within a prescribed 
tolerance is achieved. They offer several attractive features as their flexibility, for example, since they allow to link two different codes for fluid flow and geomechanics for solving the coupled poroelastic problems. The design of iterative schemes however is an important consideration for an efficient, convergent, and robust algorithm.   
The most used iterative coupling methods are the drained and undrained splits, which solve the mechanical problem first, and the fixed-strain and fixed-stress splits, which on the contrary solve the flow problem first \cite{Kim2009, Kim_PhD}.

Among iterative coupling schemes, the fixed stress splitting method is the most widely used.
This sequential-implicit method basically consists in solving the flow problem first fixing the volumetric mean total stress, and then the mechanics  part is solved from the values obtained at the previous flow step. In the last years, a lot of research has been done on this method. The unconditional stability of the fixed-stress split method is shown in \cite{Kim2011} using a von Neumann analysis. In addition, stability and convergence of the fixed-stress split method have been rigorously established in \cite{Mikelic}. Recently, in \cite{Both2017} the authors have proven 
the convergence of the fixed-stress split method in energy norm for heterogeneous problems.
Estimates for the case of the multirate iterative coupling scheme are obtained in \cite{Almani}, where multiple finer time steps for flow are taken within one coarse mechanics time step, exploiting the different time scales for the mechanics and flow problems. In \cite{Bause}, the convergence of this method is demonstrated in the fully discrete case when space-time finite element methods are used. 
%In \cite{Kim2009}
%the same authors study the stability and convergence of other iterative coupling 
%schemes.
In \cite{Castelleto2015}, the authors present a very interesting approach which consists to re-interpret the fixed-stress split scheme as a preconditioned-Richardson iteration with a particular block-triangular preconditioning operator. Recently, in \cite{Gaspar_Rodrigo2017} an inexact version of the fixed-stress split scheme has been successfully proposed as smoother in a geometric multigrid framework, which provides an efficient monolithic solver for Biot's problem. 

All the  previously mentioned algorithms are based on a time-marching approach, in which each time step is solved after the other in a sequential manner, and therefore they do not allow the parallelization of the
temporal variable. Time parallel time integration methods, however, are receiving a lot of interest nowadays because of the advent of massively parallel systems with thousands of cores, permitting to reduce drastically the computing time \cite{Gander2015}. Due to the mixed elliptic-parabolic structure of Biot's problem, the development of parallel-in time algorithms is not intuitive. In the present work, we introduce a very simple version of the fixed-stress splitting method for the poroelasticity problem which allows an easy parallelization in time.  We further analyze the method and show that it is convergent. Techniques similar with the ones from \cite{Mikelic, Both2017, Bause} are used. For completeness, in Section 3, we include a new proof for the convergence of the fixed-stress split algorithm in the semidiscrete case. The theoretical results are sustained by numerical computations.

The remainder of the paper is organized as follows. In Section \ref{sec:model} we briefly introduce the poroelasticity model and present the considered finite element discretizations. Section \ref{sec:fixed_stress} is devoted to the description of the classical fixed-stress split algorithm. In Section \ref{sec:parallel_fixed}, the parallel-in-time new approach based on the fixed-stress split algorithm is presented and its convergence analysis is derived. 
Section \ref{sec:numerical} illustrates the robustness of the proposed parallel-in-time fixed-stress split method through a numerical experiment. Finally, some conclusions are drawn in Section \ref{sec:conclusion}.

\section{Mathematical model and discretization}\label{sec:model}

The equations describing poroelastic flow and deformation are derived from the principles of
fluid mass conservation and the balance of forces on the porous matrix.
More concretely, according to Biot's theory~\cite{Biot1,Biot2}, and assuming $\Omega$ a bounded open subset of $ {\mathbb R}^d,\; d \leq 3$, with regular
boundary $\Gamma$, the consolidation process must satisfy the following system of partial differential equations:
\begin{eqnarray}
\mbox{\rm equilibrium equation:} & & -{\rm div} \, {\boldsymbol \sigma}' + \alpha \nabla \, p = \rho {\bm g}, \quad {\rm in} \, \Omega, \label{eq11} \\
\mbox{\rm constitutive equation:} & & \bm{\sigma}' = 2 G {\boldsymbol \varepsilon}(\bm{u})  + \lambda\ddiv(\bm{u}) {\bm I}, \quad {\rm in} \, \Omega,
\label{eq12} \\
\mbox{\rm compatibility condition:} & & {\boldsymbol \varepsilon}({\bm u}) = \frac{1}{2}(\nabla {\bm u} + \nabla
{\bm u}^t), \quad {\rm in} \, \Omega,
\label{eq13} \\
\mbox{\rm Darcy's law:} & & {\bm q} = - \frac{1}{\mu_f} {\bm K}
\left(\nabla p  - \rho_f {\bm g} \right), \quad {\rm in} \, \Omega,
\label{eq14} \\
\mbox{\rm continuity equation:} & &
\frac{\partial}{\partial t} \left( \frac{1}{\beta} p + \alpha \nabla \cdot \, \bm u \right) + \nabla \cdot
\, {\bm q} = f, \quad {\rm in} \, \Omega,
\label{eq15}
\end{eqnarray}
where % $G$ and $\lambda$ are the Lam\'{e} coefficients, 
$\boldsymbol I$
is the identity tensor, ${\bm u}$ is the displacement vector, $p$ is the pore pressure,
${\boldsymbol \sigma}'$ and ${\boldsymbol \varepsilon}$ are the effective stress and strain tensors
for the porous medium, ${\bm g}$ is the gravity vector, ${\bm q}$ is the percolation velocity of the fluid relative to the soil, $\mu_f$ is the fluid viscosity and  $\boldsymbol K$  is the absolute permeability
tensor. The Lam\'{e} coefficients, $\lambda$ and $G$, can be also expressed in terms of the
Young's modulus $E$ and the Poisson's ratio $\nu$  as $\lambda = E \nu / ((1-2 \nu )(1+\nu))$ 
and  $G= E/(2+2\nu)$. The bulk density  $\rho$ is related to the densities of the solid ($\rho_s$) and fluid ($\rho_f$) phases as $\rho = \phi \rho_f + (1-\phi) \rho_s$, where $\phi$ is the porosity. $\beta$ is the Biot modulus
and $\alpha$ is the Biot coefficient given by $\alpha = 1 - K_b/K_s,$ where $K_b$ is the drained bulk modulus , and $K_s$ is the bulk modulus of the solid phase.

If considering the displacements of the solid matrix ${\bm u}$ and the pressure of the fluid $p$ as primary variables, we obtain the so-called two-field formulation of the Biot's consolidation model. With this idea in mind, combining equations \eqref{eq11}-\eqref{eq15}, the mathematical model can be written as
\begin{eqnarray}
&& -\ddiv\bm{\sigma}' + \alpha \nabla p = \rho {\bm g},\qquad
\bm{\sigma}' = 2 G \ {\boldsymbol \varepsilon}(\bm{u})  + \lambda\ddiv(\bm{u}) \bm{I}, \label{two-field1}\\
&& \frac{\partial}{\partial t} \left( \frac{1}{\beta} p + \alpha \nabla \cdot \, \bm u \right) - \nabla \cdot
\, \left( \frac{1}{\mu_f} {\bm K}
\left(\nabla p  - \rho_f {\bm g} \right)\right) = f.\label{two-field2}
\end{eqnarray}
The most important feature of this mathematical model is that the equations are strongly coupled. Here, the Biot parameter $\alpha$ plays the role of coupling parameter between these equations. 
In order to ensure the existence and uniqueness of solution, we must supplement the system with appropriate boundary and  initial conditions. For instance,
\begin{equation}\label{bound-cond}
\begin{array}{ccccc}
  p = 0, & &  \quad \boldsymbol \sigma' \, {\bm n} &=& {\bm 0}, \quad \hbox {on }\Gamma _t, \\
{\bm u} = {\bm 0}, & & \quad  \displaystyle  {\bm K}  \left(\nabla p  - \rho_f {\bm g} \right) \cdot {\bm n} &=& 0, \quad \hbox {on } \Gamma _c,
\end{array}
\end{equation}
where ${\bm n}$ is the unit outward normal to the boundary and
$\Gamma_t \cup \Gamma_c = \Gamma$, with $\Gamma_t$ and $\Gamma_c$
disjoint subsets of $\Gamma$ having non null measure.
For the initial time, $t=0$, the following condition is fulfilled
\begin{equation}\label{ini-cond}
     \left (\frac{1}{\beta} p + \alpha \nabla \cdot {\bm u} \right)\, (\bm{x},0)=0, \,  \bm{x} \in\Omega.
\end{equation}

\subsection{Semi-discretization in space}
To introduce the spatial discretization of the Biot model, we choose the finite element method. We define the standard Sobolev spaces ${\bm V} = \{{\bm u}\in (H^1(\Omega))^n \ |  \ {\bm u}|_{\Gamma _c} = {\bm 0} \},$
and $Q = \{p \in H^1(\Omega) \ |  \  p|_{\Gamma _t} = 0\}$, with $H^1(\Omega)$ denoting
the Hilbert subspace of $L_2(\Omega)$ of functions with first weak derivatives in
$L_2(\Omega)$. Then, we introduce the variational formulation for the two-field formulation of
the Biot's model as follows: For each $t \in (0,T]$, find $({\bm u}(t) ,p(t)) \in {\bm V} \times Q$ such that
\begin{eqnarray}
  && a(\bm{u}(t),\bm{v}) -  \alpha(p(t),\ddiv \bm{v}) = (\rho \bm g,\bm{v}),
     \quad \forall \  \bm{v}\in \bm V, \label{variational1}\\
  &&  \alpha(\ddiv \partial_t{\bm{u}(t)},q) + \frac{1}{\beta}(\partial_t p(t),q) + b(p(t),q)   = (f,q) +
   ( {\bm K}\mu_f^{-1} \rho_f {\bm g},\nabla q ),
   \quad \forall \ q \in Q,\label{variational2}
\end{eqnarray}
where $(\cdot,\cdot)$ is the standard inner product in the
space $L_2(\Omega)$, and the bilinear forms $a(\cdot,\cdot)$ and $b(\cdot,\cdot)$ are given as
\begin{eqnarray*}\label{bilinear}
a(\bm{u},\bm{v}) &=& 2 G \int_{\Omega}{\boldsymbol \varepsilon}(\bm{u}):{\boldsymbol \varepsilon}(\bm{v}) \, {\rm d} \Omega +
\lambda\int_{\Omega} \ddiv\bm{u}\ddiv\bm{v} \, {\rm d} \Omega, \\
b(p,q) &=&  \int_{\Omega} \frac{{\bm K}}{\mu_f} \nabla p \cdot \nabla q \, {\rm d} \Omega.
\end{eqnarray*}
Finally, the initial condition is given by
\begin{equation}
\left (\frac{1}{\beta} p(0) + \alpha \nabla \cdot {\bm u}(0),q \right) = 0, \quad \forall \ q \in L_2(\Omega).
\end{equation}
It is important to consider a finite element pair of spaces ${\bm V}_h \times Q_h$ satisfying an inf-sup condition. One very simple choice would be the stabilized P1-P1 scheme firstly introduced in \cite{Aguilar2008} and widely analyzed in \cite{RGHZ2016}, in which ${\bm V}_h $ consists of the space of piece-wise (with respect to a triangulation ${\cal T}_h$) linear continuous vector valued functions on $\Omega$ and the space $Q_h$ consists of piece-wise linear continuous scalar valued functions. Other choices would be P2-P1, that is, piece-wise quadratic continuous vector valued functions for displacements and piece-wise linear continuous scalar valued functions for pressure, widely studied by Murad and Loula \cite{MuradLoula92, MuradLoula94, MuradLoulaThome}; or the so-called MINI element \cite{RGHZ2016} in which the only difference is that ${\bm V}_h  = {\bm V}_l \oplus {\bm V}_b$, where ${\bm V}_l$ the space of piece-wise linear continuous vector valued functions and ${\bm V}_b$ is the space of bubble functions. Discrete inf-sup stability conditions and convergence results for the stabilized P1-P1 and the MINI element were recently derived in \cite{RGHZ2016}.

The semi-discretized problem can be written as follows: For each $t\in (0,T]$,
find $({\bm u}_h(t) ,p_h(t)) \in {\bm V}_h \times Q_h$ such that
\begin{eqnarray}
  && a(\bm{u}_h(t),\bm{v}_h) -  \alpha(p_h(t),\ddiv \bm{v}_h) = (\rho \bm g,\bm{v}_h),
     \quad \forall \  \bm{v}_h \in \bm V_h, \label{semi_discrete_variational1}\\
  &&  \alpha(\ddiv \partial_t{\bm{u}_h(t)},q_h) + \frac{1}{\beta} (\partial_t{p_h(t)},q_h) + b(p_h(t),q_h)  = (f_h,q_h) + ( {\bm K}\mu_f^{-1} \rho_f {\bm g},\nabla q_h ),
   \quad \forall \ q_h \in Q_h,\label{semi_discrete_variational2}
\end{eqnarray}
giving rise to the following algebraic/differential equations system,
\begin{equation}\label{system_ODES}
\left[ \begin{array}{cc}
0 & 0 \\
B & M_p
\end{array} \right]
\left[ \begin{array}{c}
\dot{u}_h \\
\dot{p}_h
\end{array} \right]
+
\left[ \begin{array}{cc}
A & B^t \\
0 & - C
\end{array} \right]
\left[ \begin{array}{c}
{u}_h \\
{p}_h
\end{array} \right]
= \left[ \begin{array}{c}
{g}_h \\
{\widetilde{f}}_h
\end{array} \right],
\end{equation}
where we have denoted $\dot{u}_h \equiv \partial_t {\bm{u}_h(t)}$ and $\dot{p}_h \equiv \partial_t{p_h(t)}$.

\begin{remark}
We wish to emphasize that the solver based on the fixed stress split method, which we are going to propose in this work, can be applied to other different discretizations of the problem as mixed finite-elements or finite volume schemes, for example.
\end{remark}

%%%%%%%%%%%%%%%%%%%%%%%%%%%%%%%%%%%%%%%%%%

\section{The fixed-stress split algorithm for the semi discretized problem}\label{sec:fixed_stress}
A popular alternative for solving the poroelasticity problem in an iterative manner is to solve first the flow problem  supposing a constant volumetric mean total stress, and once the flow problem is solved, the mechanic problem is then exactly solved. This is the so-called fixed-stress split method. More concretely, the volumetric mean total stress is defined as the mean of the trace of the total stress tensor, that is $\sigma_v = {\rm tr}({\bm \sigma}) /3$, and it is related to the volumetric strain $\varepsilon_v = {\rm tr}(\varepsilon)$ as
$\sigma_v = K_b \varepsilon_v - \alpha p.$
By using this relation, we write the flow equation in terms of the volumetric mean total stress instead of the volumetric strain,
\begin{equation}\label{flow2_equation}
\left(\frac{1}{\beta} + \frac{\alpha^2}{K_b} \right) \frac{\partial p}{\partial t}  +
\frac{\alpha}{K_b} \frac{\partial \sigma_v}{\partial t} - \nabla \cdot
\, \left( \frac{1}{\mu_f} {\bm K}
\left(\nabla p  - \rho_f {\bm g} \right)\right) = f.
\end{equation}
Then, the fixed-stress split scheme is based on solving the flow equation considering known
the volumetric mean total stress,
\begin{equation}\label{flow3_equation}
\left(\frac{1}{\beta} + \frac{\alpha^2}{K_b} \right) \frac{\partial p}{\partial t}   - \nabla \cdot
\, \left( \frac{1}{\mu_f} {\bm K}
\left(\nabla p  - \rho_f {\bm g} \right)\right) = f - \alpha \frac{\partial} {\partial t} (\nabla \cdot \, {\bf u} ) 
+ \frac{\alpha^2}{K_b} \frac{\partial p}{\partial t}.
\end{equation}
Finally, instead of the physical parameter $\frac{\alpha^2}{K_b}$, a general parameter $L$ to fix can be considered, obtaining 
\begin{equation}\label{flow4_equation}
\left(\frac{1}{\beta} + L \right) \frac{\partial p}{\partial t}   - \nabla \cdot
\, \left( \frac{1}{\mu_f} {\bm K}
\left(\nabla p  - \rho_f {\bm g} \right)\right) = f - \alpha \frac{\partial} {\partial t} (\nabla \cdot \, {\bf u} ) 
+  L \frac{\partial p}{\partial t}.
\end{equation}
Then, given an initial guess $({\bm u}_h^0(t),p_h^0(t))$, the  fixed-stress split algorithm gives us a sequence of approximations $({\bm u}_h^i(t),p_h^i(t))$, $i \geq 1$ as follows: \\

\noindent {\bf Step 1:} Given $({\bm u}_h^{i-1}(t),p_h^{i-1}(t)) \in {\bm V}_h \times Q_h$, find $p_h^i(t) \in Q_h$ such that
\begin{eqnarray}  
  &&  (\frac{1}{\beta} +L) (\partial_t{p_h^i(t)},q_h) + b(p_h^i(t),q_h)  + \alpha(\ddiv \partial_t{\bm{u}^{i-1}_h(t)},q_h) = L (\partial_t{p_h^{i-1}(t)},q_h) + \nonumber \\
 & & \qquad \qquad   (f_h,q_h) + ( {\bm K}\mu_f^{-1} \rho_f {\bm g},\nabla q_h ),
   \quad \forall \ q_h \in Q_h.\label{discrete_variational_split_pressure}
\end{eqnarray}
{\bf Step 2:} Given $p_h^{i}(t) \in Q_h$, find ${\bm u}_h^{i}(t) \in {\bm V}_h$ such that
\begin{equation} \label{discrete_variational_split_displacement}
a(\bm{u}_h^i(t),\bm{v}_h) =  \alpha(p_h^i(t),\ddiv \bm{v}_h) + (\rho \bm g,\bm{v}_h),
     \quad \forall \  \bm{v}_h \in \bm V_h. 
\end{equation}     
The algorithm starts with an initial approximation $({\bm u}_h^0(t),p_h^0(t))$ defined along the
whole time-interval. A natural choice is to take this approximation constant and equal
to the values specified by the initial condition, $({\bm u}_h^0(t),p_h^0(t)) = (\bm u_0, p_0), \; t \in (0,T].$
\subsection{Convergence analysis in the semidiscrete case}
Let $\delta \bm{u}_h^i(t) = \bm{u}_h^i(t) - \bm{u}_h^{i-1}(t)$ and
$\delta p_h^i(t) = p_h^i(t)-p_h^{i-1}(t)$ denote the difference between two
succesive approximations for displacements and for pressure, respectively.
\begin{thm}
The fixed-stress split method given in \eqref{discrete_variational_split_pressure}-\eqref{discrete_variational_split_displacement} converges for any $L \ge \frac{\alpha^2}{2(\frac{2G}{d}+\lambda)}$. There holds
%is a contraction given by
\begin{equation}
\label{thm_contraction_semidiscrete}
\int_0^t \| \partial_t \delta p_h^i(s) \|^2 \, {\rm d} s \leq 
\frac{L}{(\frac{1}{\beta}+L)}
\int_0^t \| \partial_t \delta p_h^{i-1}(s) \|^2 \, {\rm d} s.
\end{equation}
%with $L = \frac{\alpha^2}{2(\frac{2G}{d}+\lambda)}$.
\end{thm}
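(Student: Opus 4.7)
The plan is to analyze the error equations satisfied by the increments $\delta p_h^i$ and $\delta \bm u_h^i$. Subtracting \eqref{discrete_variational_split_pressure}--\eqref{discrete_variational_split_displacement} at two consecutive iterates, the increments satisfy, for all $q_h \in Q_h$ and $\bm v_h \in \bm V_h$,
\begin{align*}
\bigl(\tfrac{1}{\beta}+L\bigr)(\partial_t \delta p_h^i, q_h) + b(\delta p_h^i, q_h) &= L(\partial_t \delta p_h^{i-1}, q_h) - \alpha(\ddiv \partial_t \delta \bm u_h^{i-1}, q_h),\\
a(\delta \bm u_h^i, \bm v_h) &= \alpha(\delta p_h^i, \ddiv \bm v_h).
\end{align*}
Since both iterates share the same initial data, $\delta p_h^i(0)=0$ and $\delta \bm u_h^i(0)=0$.

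First I would test the pressure error equation with $q_h = \partial_t \delta p_h^i$, use $b(\delta p_h^i,\partial_t \delta p_h^i) = \tfrac12 \tfrac{d}{dt} b(\delta p_h^i,\delta p_h^i)$, integrate from $0$ to $t$, and drop the nonnegative boundary term $\tfrac12 b(\delta p_h^i(t),\delta p_h^i(t))$. This yields
\[
\bigl(\tfrac{1}{\beta}+L\bigr)\int_0^t \|\partial_t \delta p_h^i\|^2\,ds \;\le\; \int_0^t \bigl(L\,\partial_t \delta p_h^{i-1} - \alpha\,\ddiv \partial_t \delta \bm u_h^{i-1},\;\partial_t \delta p_h^i\bigr)\,ds .
\]
A Cauchy--Schwarz step in space then reduces the problem to controlling the $L^2(\Omega)$-norm of $L\,\partial_t \delta p_h^{i-1} - \alpha\,\ddiv \partial_t \delta \bm u_h^{i-1}$ by $L\,\|\partial_t \delta p_h^{i-1}\|$.

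The crux is to square the right-hand factor:
\[
\|L\,\partial_t \delta p_h^{i-1} - \alpha\,\ddiv \partial_t \delta \bm u_h^{i-1}\|^2 = L^2\|\partial_t \delta p_h^{i-1}\|^2 - 2L\alpha(\partial_t \delta p_h^{i-1},\ddiv \partial_t \delta \bm u_h^{i-1}) + \alpha^2\|\ddiv \partial_t \delta \bm u_h^{i-1}\|^2.
\]
Differentiating the displacement error equation in time and testing with $\bm v_h = \partial_t \delta \bm u_h^{i-1}$ gives the identity $\alpha(\partial_t \delta p_h^{i-1},\ddiv \partial_t \delta \bm u_h^{i-1}) = a(\partial_t \delta \bm u_h^{i-1},\partial_t \delta \bm u_h^{i-1})$. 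Using the elementary inequality $(\ddiv \bm v)^2 \le d\,|\varepsilon(\bm v)|^2$ (arising from Cauchy--Schwarz on the trace) together with the definition of $a$ produces the coercivity-type bound $a(\bm v,\bm v) \ge (\tfrac{2G}{d}+\lambda)\|\ddiv \bm v\|^2$. Substituting back, the cross term dominates the $\alpha^2\|\ddiv\|^2$ term exactly when $L \ge \alpha^2/[2(\tfrac{2G}{d}+\lambda)]$, and one obtains
\[
\|L\,\partial_t \delta p_h^{i-1} - \alpha\,\ddiv \partial_t \delta \bm u_h^{i-1}\|^2 \;\le\; L^2\|\partial_t \delta p_h^{i-1}\|^2.
\]

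Inserting this into the integrated pressure inequality and applying Cauchy--Schwarz in time gives
\[
\bigl(\tfrac{1}{\beta}+L\bigr)\int_0^t\|\partial_t \delta p_h^i\|^2\,ds \;\le\; L \Bigl(\int_0^t\|\partial_t \delta p_h^i\|^2\,ds\Bigr)^{1/2}\Bigl(\int_0^t\|\partial_t \delta p_h^{i-1}\|^2\,ds\Bigr)^{1/2},
\]
which after dividing and squaring yields a contraction with ratio $L^2/(\tfrac{1}{\beta}+L)^2$; since this ratio is smaller than $L/(\tfrac{1}{\beta}+L) < 1$, the claimed bound \eqref{thm_contraction_semidiscrete} follows. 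The main obstacle is the middle step: one has to recognize that the two perturbation terms on the right-hand side of the pressure error equation are not bounded separately by Young's inequality, but rather combined into a single squared norm whose expansion, thanks to the displacement equation and the coercivity estimate, collapses to $L^2\|\partial_t \delta p_h^{i-1}\|^2$ precisely under the stated lower bound on $L$.
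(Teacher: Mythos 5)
Your proof is correct, and it actually delivers the slightly sharper factor $L^2/(\frac{1}{\beta}+L)^2$, but it follows a genuinely different route from the paper's. The paper never isolates the right-hand side of the pressure error equation as a single source: it sums the flow increment equation tested with $\partial_t \delta p_h^i$ and the time-differentiated mechanics increment equation at level $i$ tested with $\partial_t \delta \bm{u}_h^{i-1}$, expands the cross terms with the polarization identities, and then invokes the auxiliary bound $\| \nabla \cdot ( \partial_t \delta \bm{u}_h^{i} - \partial_t \delta \bm{u}_h^{i-1}) \| \leq \frac{\alpha}{\frac{2G}{d}+\lambda} \| \partial_t \delta p_h^i - \partial_t \delta p_h^{i-1}\|$ (obtained by testing the mechanics increment with $\partial_t \delta \bm{u}_h^{i} - \partial_t \delta \bm{u}_h^{i-1}$), so that the condition $L \ge \frac{\alpha^2}{2(\frac{2G}{d}+\lambda)}$ is used to absorb the term in $\| \partial_t \delta p_h^i - \partial_t \delta p_h^{i-1}\|^2$. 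You instead work only with the flow error equation and show that the combined source $L\,\partial_t \delta p_h^{i-1} - \alpha\,\nabla\cdot\partial_t \delta \bm{u}_h^{i-1}$ has $L^2$-norm at most $L\|\partial_t \delta p_h^{i-1}\|$, using the mechanics increment equation at level $i-1$ (which gives $\alpha(\partial_t \delta p_h^{i-1},\nabla\cdot\partial_t \delta \bm{u}_h^{i-1}) = a(\partial_t \delta \bm{u}_h^{i-1},\partial_t \delta \bm{u}_h^{i-1})$) together with the coercivity $a(\bm v,\bm v) \ge (\frac{2G}{d}+\lambda)\|\nabla\cdot \bm v\|^2$; the same threshold on $L$ appears, but its role is to make the expanded square collapse rather than to absorb an increment difference. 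Your argument is shorter, avoids the polarization identities and the auxiliary divergence estimate, and gives the squared contraction factor; the paper's combined energy estimate, on the other hand, keeps the elastic energy of the displacement increments on the left-hand side and is the form that transfers verbatim to the fully discrete, parallel-in-time analysis of Section \ref{sec:parallel_fixed}. Both arguments rely on the same implicit conventions: all iterates carry the same initial condition in time, so $\delta p_h^i(0)=0$ (which you need when dropping $\frac{1}{2}\|\delta p_h^i(t)\|_B^2$ after integrating the $b$-term), and the increment equations are available for $i \ge 2$ (or for $i=1$ if the initial guess is taken consistent with Step 2).
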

\begin{proof}
We take the time derivative of the difference of two successive iterates of the mechanic equation \eqref{discrete_variational_split_displacement}  and test the resulting equation by 
${\bf v}_h = \partial_t \delta  {\bf u}_h^{i-1}$ to get
\begin{equation} \label{test_mech}
2 G ({\boldsymbol \varepsilon}( \partial_t \delta  {\bf u}_
h^{i}),{\boldsymbol \varepsilon}( \partial_t \delta  {\bf u}_h^{i-1}))
+ \lambda (\nabla \cdot \partial_t \delta  {\bf u}_
h^{i},\nabla \cdot \partial_t \delta  {\bf u}_
h^{i-1}) - \alpha (\partial_t \delta p_h^i,\nabla \cdot \partial_t \delta  {\bf u}_
h^{i-1}) = 0.
\end{equation}
By taking the difference between two successive iterates of the flow equation
 \eqref{discrete_variational_split_pressure} and testing with $q_h = \partial_t \delta p_h^i$, we obtain 
\begin{equation} \label{test_flow}
\frac{1}{\beta} \| \partial_t \delta p_h^i \|^2 + L (\partial_t(\delta p_h^i - \delta p_h^{i-1}), \partial_t \delta p_h^i) +  b( \delta p_h^i,\partial_t \delta p_h^i) +\alpha (\nabla \cdot \partial_t \delta  {\bf u}_
h^{i-1},\partial_t \delta p_h^i) = 0.
\end{equation}
After summing up equations \eqref{test_mech} and \eqref{test_flow}, and using the identities
$$
(\sigma,\xi) = \frac{1}{4} \| \sigma+\xi \|^2-\frac{1}{4} \| \sigma-\xi \|^2, \quad  (\sigma-\xi,\sigma) = \|\sigma\|^2 - \| \xi \|^2+\| \sigma-\xi \|^2,
$$
one has
\begin{eqnarray}
& & \frac{G}{2} \| {\boldsymbol \varepsilon} ( \partial_t \delta  {\bf u}_h^{i} + \partial_t \delta  {\bf u}_h^{i-1}) \|^2 + \frac{\lambda}{4} \| \nabla \cdot ( \partial_t \delta  {\bf u}_h^{i} + \partial_t \delta  {\bf u}_h^{i-1}) \|^2 + \frac{1}{\beta} \| \partial_t \delta p_h^i \|^2 + \frac{1}{2} \frac{{\rm d}}{{\rm d} t} \| \delta p_h^i \|_B^2   \nonumber  \\
& & + \frac{L}{2} (\| \partial_t \delta p_h^i \|^2 - \| \partial_t \delta p_h^{i-1} \|^2  
+ \| \partial_t \delta p_h^i -  \partial_t \delta p_h^{i-1}\|^2) = 
\frac{G}{2} \| {\boldsymbol \varepsilon} ( \partial_t \delta  {\bf u}_h^{i} - \partial_t \delta  {\bf u}_h^{i-1}) \|^2 + \nonumber \\
& & \frac{\lambda}{4} \| \nabla \cdot ( \partial_t \delta  {\bf u}_h^{i} - \partial_t \delta  {\bf u}_h^{i-1}) \|^2. \label{intermediate_formula}
\end{eqnarray}
Next, we consider the time derivative of the difference of two successive iterates of the mechanic equation \eqref{discrete_variational_split_displacement}  and test  by 
${\bf v}_h = \partial_t \delta  {\bf u}_h^{i} -  \partial_t \delta  {\bf u}_h^{i-1}$. By applying the Cauchy-Schwarz inequality, it follows
\begin{equation}
\label{ineq_div_pressure}
 \| \nabla \cdot ( \partial_t \delta  {\bf u}_h^{i} - \partial_t \delta  {\bf u}_h^{i-1}) \| \leq
\frac{\alpha}{\frac{2G}{d}+\lambda} \| \partial_t \delta p_h^i -  \partial_t \delta p_h^{i-1}\|.
\end{equation}
Inserting equality \eqref{test_mech} into equation  \eqref{intermediate_formula} and by applying  Cauchy-Schwarz  and \eqref{ineq_div_pressure} inequalities, we obtain 
 \begin{eqnarray}
& & \frac{G}{2} \| {\boldsymbol \varepsilon} ( \partial_t \delta  {\bf u}_h^{i} + \partial_t \delta  {\bf u}_h^{i-1}) \|^2 + \frac{\lambda}{4} \| \nabla \cdot ( \partial_t \delta  {\bf u}_h^{i} + \partial_t \delta  {\bf u}_h^{i-1}) \|^2 + \frac{1}{\beta} \| \partial_t \delta p_h^i \|^2 + \frac{1}{2} \frac{{\rm d}}{{\rm d} t} \| \delta p_h^i \|_B^2   \nonumber  \\
& &  + \frac{L}{2}(\| \partial_t \delta p_h^i \|^2 
+ \| \partial_t \delta p_h^i -  \partial_t \delta p_h^{i-1}\|^2) \leq \frac{L}{2}  \| \partial_t \delta p_h^{i-1} \|^2   +
\frac{\alpha^2}{4(\frac{2G}{d}+\lambda)} \| \partial_t \delta p_h^i -  \partial_t \delta p_h^{i-1}\|^2. \nonumber
\end{eqnarray}
Discarding the first three positive terms, taking $L \ge  \frac{\alpha^2}{2(\frac{2G}{d}+\lambda)}$, and integrating from $0$ to $t$ we finally obtain \eqref{thm_contraction_semidiscrete}. It implies that the scheme is a contraction and therefore convergent. This completes the proof.
\end{proof}

\begin{remark}
It is easy to see that the fixed-stress split method in the semidiscrete case is an iterative method based on a suitable splitting for solving the differential/algebraic equation system \eqref{system_ODES}. In detail, the iterative method can be written in the form
\begin{equation}
\left[\!
\begin{array}{cc}
0 & 0 \\
0 & (1+L) M_p
\end{array}
\!\right]
\left[\!
\begin{array}{c}
\dot{u}_h^i \\
\dot{p}_h^i
\end{array}
\!\right]
\!+\!
\left[\!
\begin{array}{cc}
A & B^t \\
0 & - C
\end{array}
\!\right]
\left[\!
\begin{array}{c}
{u}_h^i \\
{p}_h^i
\end{array}
\!\right]
\!= \!\left[\!
\begin{array}{cc}
0 & 0 \\
-B & L M_p
\end{array}
\!\right] \left[\!
\begin{array}{c}
\dot{u}_h^{i-1} \\
\dot{p}_h^{i-1}
\end{array}
\!\right]
\!+\!
\left[\!
\begin{array}{c}
{g}_h \\
{\widetilde{f}}_h
\end{array}
\!\right].
\label{splitting_DAE}
\end{equation}
\end{remark}

%%%%%%%%%%%%%%%%%%%%%%%%%%%%%%%%%%%%%%%%%%

\section{The parallel in time fixed-stress split algorithm for the fully discretized problem}  \label{sec:parallel_fixed}
\subsection{Parallel-in-time algorithm}
For time discretization we use the backward Euler method on a uniform partition $\left\{t_0, t_1, \ldots, t_N\right\}$ of the time interval $(0,T]$ with constant time-step size $\tau$, $N \tau = T$. Then, we have the following fully discrete scheme corresponding to \eqref{semi_discrete_variational1}-\eqref{semi_discrete_variational2}: For $n = 1, 2, \ldots, N$, find $({\bm u}_h^n ,p_h^n) \in {\bm V}_h \times Q_h$ such that
\begin{eqnarray}
  && a(\bm{u}_h^n,\bm{v}_h) -  \alpha(p_h^n,\ddiv \bm{v}_h) = (\rho \bm g,\bm{v}_h),
     \quad \forall \  \bm{v}_h \in \bm V_h, \label{total_discrete_variational1}\\
  &&  \alpha(\ddiv \bar{\partial}_t{\bm{u}_h^n},q_h) + \frac{1}{\beta} ( \bar{\partial}_t{p_h^n},q_h) + b(p_h^n,q_h)  = (f_h^n,q_h) + ( {\bm K}\mu_f^{-1} \rho_f {\bm g},\nabla q_h ),
   \; \forall \ q_h \in Q_h,\label{total_discrete_variational2}
\end{eqnarray}
where $\bar{\partial}_t{\bm{u}_h^n} := (\bm{u}_h^n - \bm{u}_h^{n-1})/\tau$ and $\bar{\partial}_t{p_h^n} := (p_h^n - p_h^{n-1})/\tau$. \\

We now discuss a parallel-in time version of the fixed-stress split method. This algorithm arises in a natural way from the iterative method \eqref{splitting_DAE} by discretizing in time. In this way, given an initial guess $\{({\bm u}_h^{n,0},p_h^{n,0}), n = 0,1, \ldots, N\}$, the  new fixed-stress split algorithm gives us a sequence of approximations $\{({\bm u}_h^{n,i},p_h^{n,i}), n = 0,1, \ldots,N\}$, $i \geq 1$, as follows: \\

\noindent {\bf Step 1:} Given $\{({\bm u}_h^{n,i-1},p_h^{n,i-1}), n = 0,1, \ldots,N\}$, find $p_h^{n,i} \in Q_h, n = 1, \ldots,N,$ such that
\begin{eqnarray}  
  &&  \left(\frac{1}{\beta} +L\right) \left(\frac{p_h^{n,i}-p_h^{n-1,i}}{\tau},q_h\right) + b(p_h^{n,i},q_h) = \alpha \left(\ddiv \frac{\bm{u}_h^{n,i-1}-\bm{u}_h^{n-1,i-1}}{\tau},q_h\right) + \nonumber \\
 & & \qquad \qquad  L \left(\frac{p_h^{n,i-1}-p_h^{n-1,i-1}}{\tau},q_h\right)+ (f_h^n,q_h) + ( {\bm K}\mu_f^{-1} \rho_f {\bm g},\nabla q_h ),
   \quad \forall \ q_h \in Q_h, \label{total_discrete_variational_split_pressure} \\
& & p_h^{0,i} = p_0. \nonumber   
\end{eqnarray}
{\bf Step 2:} Given $p_h^{n,i} \in Q_h, n = 1, \ldots,N,$ find ${\bm u}_h^{n,i} \in {\bm V}_h, n = 1, \ldots,N,$ such that
\begin{equation} \label{total_discrete_variational_split_displacement}
a(\bm{u}_h^{n,i},\bm{v}_h) =  \alpha(p_h^{n,i}, \ddiv \bm{v}_h) + (\rho \bm g,\bm{v}_h),
     \quad \forall \  \bm{v}_h \in \bm V_h. 
\end{equation}

\begin{remark}
We wish to emphasize that the proposed method is parallel-in-time in contrast to the classical
sequential fixed-stress split method based on time-stepping, given as follows: \\ 

%%%%%%%%%%%%%%%%%%%%%%%%%%%%%%%%%%%%%%%%%%%
\hrule
\vspace{0.2cm}
\noindent For $n = 1, 2, \ldots, N$\\ [-3.5ex]

\;  For $i=1,2,\ldots$\\ [-3.5ex]

\quad \quad {\bf Step 1:} Given $({\bm u}_h^{n,i-1},p_h^{n,i-1}) \in {\bm V}_h \times Q_h$, find $p_h^{n,i} \in Q_h$ such that
%\begin{eqnarray}  
%  &&  \left(\frac{1}{M} +L\right) (\bar{\partial}_t {p_h^{n,i}},q_h) + b(p_h^{n,i},q_h)  = - \alpha(\ddiv \bar{\partial}_t {\bm{u}^{n,i-1}_h},q_h)  \nonumber \\
% & & \qquad \qquad  + L (\bar{\partial}_t {p_h^{n,i-1}},q_h)  +  (\widetilde{f},q_h)  \quad \forall \ q_h \in Q_h.\nonumber
%\end{eqnarray}
\begin{eqnarray*}  
  &&  \left(\frac{1}{\beta} +L\right) \left(\frac{p_h^{n,i}-p_h^{n-1,i}}{\tau},q_h\right) + b(p_h^{n,i},q_h) = \alpha \left(\ddiv \frac{\bm{u}_h^{n,i-1}-\bm{u}_h^{n-1,i-1}}{\tau},q_h\right) + \nonumber \\
 & & \qquad \qquad  L \left(\frac{p_h^{n,i-1}-p_h^{n-1,i-1}}{\tau},q_h\right)+ (f_h^n,q_h) + ( {\bm K}\mu_f^{-1} \rho_f {\bm g},\nabla q_h ),
   \quad \forall \ q_h \in Q_h, \label{total_discrete_variational_split_pressure_2} 
\end{eqnarray*}

\quad \quad {\bf Step 2:} Given $p_h^{n,i} \in Q_h$, find ${\bm u}_h^{n,i} \in {\bm V}_h$ such that
%$$
%2 \mu (\varepsilon(\bm{u}_h^{n,i}),\varepsilon(\bm{v}_h)) + \lambda (\ddiv\bm{u}_h^{n,i},\ddiv\bm{v}_h) = \alpha(p_h^{n,i},\ddiv \bm{v}_h) + (\rho \bm g,\bm{v}_h),
%     \forall \  \bm{v}_h \in \bm V_h. 
%$$
\begin{equation*} \label{total_discrete_variational_split_displacement_2}
a(\bm{u}_h^{n,i},\bm{v}_h) =  \alpha(p_h^{n,i}, \ddiv \bm{v}_h) + (\rho \bm g,\bm{v}_h),
     \quad \forall \  \bm{v}_h \in \bm V_h. 
\end{equation*}

\; End \\ [-3.5ex]

\noindent End\\
\hrule
\vspace{0.5cm}
%%%%%%%%%%%%%%%%%%%%%%%%%%%%%%%%%%%%%%%%%%%
Notice that in the Step 2 of the new algorithm, $N-1$ independent elliptic problems can be solved in parallel. 
\end{remark} 

\subsection{Convergence analysis}
Let $\delta \bm{u}_h^{n,i} = \bm{u}_h^{n,i} - \bm{u}_h^{n,i-1}$ and
$\delta p_h^{n,i} = p_h^{n,i}-p_h^{n,i-1}$ denote the difference between two
succesive approximations for displacements and for pressure, respectively.
\begin{thm}
The fixed-stress split method given in \eqref{total_discrete_variational_split_pressure}-\eqref{total_discrete_variational_split_displacement}
is convergent for any stabilization parameter  $L \ge \frac{\alpha^2}{2(\frac{2G}{d}+\lambda)}$. There holds
\begin{equation}
\label{thm_contraction_fullydiscrete}
\sum_{n=1}^N \tau \| \bar{\partial}_t \delta p_h^{n,i} \|^2 \, \leq 
\frac{L}{(\frac{1}{\beta}+L)}
\sum_{n=1}^N \tau \| \bar{\partial}_t \delta p_h^{n,i-1} \|^2.
\end{equation}
%with $L = \frac{\alpha^2}{2(\frac{2G}{d}+\lambda)}$.
\end{thm}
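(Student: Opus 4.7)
The plan is to replay the semidiscrete argument \eqref{test_mech}--\eqref{ineq_div_pressure} verbatim, with the backward Euler finite difference $\bar{\partial}_t$ in the role of $\partial_t$ and the Riemann sum $\sum_{n=1}^{N}\tau(\cdot)$ in the role of $\int_0^t(\cdot)\,\mathrm{d}s$. Since the mechanics equation \eqref{total_discrete_variational_split_displacement} is static in time, its ``time derivative'' at iterate $i$ is obtained by subtracting the equation at time level $n-1$ from the one at level $n$, dividing by $\tau$, and then taking the iterate increment, which yields
\[
a(\bar{\partial}_t \delta \bm{u}_h^{n,i},\bm{v}_h) = \alpha(\bar{\partial}_t \delta p_h^{n,i},\ddiv \bm{v}_h), \qquad \forall\,\bm{v}_h\in\bm{V}_h.
\]
I would test this relation with $\bm{v}_h = \bar{\partial}_t \delta \bm{u}_h^{n,i-1}$, and likewise test the iterate increment of the pressure equation \eqref{total_discrete_variational_split_pressure} with $q_h = \bar{\partial}_t \delta p_h^{n,i}$. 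These are the fully discrete counterparts of \eqref{test_mech} and \eqref{test_flow}, and upon addition the cross term $\alpha(\ddiv\bar{\partial}_t \delta \bm{u}_h^{n,i-1}, \bar{\partial}_t \delta p_h^{n,i})$ cancels.

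Next I would apply the algebraic identities $(\sigma,\xi)=\tfrac{1}{4}\|\sigma+\xi\|^2-\tfrac{1}{4}\|\sigma-\xi\|^2$ and $(\sigma-\xi,\sigma)=\tfrac{1}{2}(\|\sigma\|^2-\|\xi\|^2+\|\sigma-\xi\|^2)$ to the remaining cross-iterate terms, producing the discrete analog of \eqref{intermediate_formula}. The only structural novelty is that the continuous telescope $\tfrac{1}{2}\tfrac{\mathrm{d}}{\mathrm{d}t}\|\delta p_h^i\|_B^2$ is replaced, via the second identity applied to $b(\delta p_h^{n,i},\bar{\partial}_t \delta p_h^{n,i})$, by the discrete telescope $\tfrac{1}{2\tau}(\|\delta p_h^{n,i}\|_B^2-\|\delta p_h^{n-1,i}\|_B^2+\|\delta p_h^{n,i}-\delta p_h^{n-1,i}\|_B^2)$. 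To bound the residual non-sign-definite contribution on the right-hand side, I would test the pure iterate increment of the mechanics equation with $\bm{v}_h = \bar{\partial}_t \delta \bm{u}_h^{n,i} - \bar{\partial}_t \delta \bm{u}_h^{n,i-1}$; combining the Korn-type bound $a(\bm{v},\bm{v})\ge(\tfrac{2G}{d}+\lambda)\|\ddiv\bm{v}\|^2$ with Cauchy--Schwarz yields the discrete analog of \eqref{ineq_div_pressure}, namely $\|\ddiv(\bar{\partial}_t \delta \bm{u}_h^{n,i}-\bar{\partial}_t \delta \bm{u}_h^{n,i-1})\|\le\tfrac{\alpha}{2G/d+\lambda}\|\bar{\partial}_t \delta p_h^{n,i}-\bar{\partial}_t \delta p_h^{n,i-1}\|$.

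Finally I would multiply the resulting pointwise-in-$n$ inequality by $\tau$ and sum from $n=1$ to $N$. The telescoping sum $\sum_{n=1}^N(\|\delta p_h^{n,i}\|_B^2-\|\delta p_h^{n-1,i}\|_B^2)$ collapses to $\|\delta p_h^{N,i}\|_B^2 \ge 0$, because the prescribed initial datum $p_h^{0,i}=p_0$ is independent of $i$ and hence $\delta p_h^{0,i}=0$; the pointwise terms $\|\delta p_h^{n,i}-\delta p_h^{n-1,i}\|_B^2$ and the ``elastic'' squared norms coming from the $\tfrac{1}{4}\|\sigma+\xi\|^2$ half of the first identity are also nonnegative, and may all be dropped. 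Choosing $L\ge \tfrac{\alpha^2}{2(2G/d+\lambda)}$ then absorbs the $\sum_{n=1}^N\tau\|\bar{\partial}_t(\delta p_h^{n,i}-\delta p_h^{n,i-1})\|^2$ piece on the right-hand side into its counterpart on the left, delivering \eqref{thm_contraction_fullydiscrete}. The main subtlety, beyond routine transcription of the semidiscrete proof, is precisely this handling of the discrete-time telescope: its boundary contribution at $n=0$ must be nonnegative in order to be dropped, which hinges on the iterate-independence of $p_h^{0,i}$ built into the algorithm.
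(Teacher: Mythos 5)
Your proposal is correct and reproduces the paper's own argument essentially step for step: the same test functions $\bar{\partial}_t \delta \bm{u}_h^{n,i-1}$, $\bar{\partial}_t \delta p_h^{n,i}$ and $\bar{\partial}_t \delta \bm{u}_h^{n,i}-\bar{\partial}_t \delta \bm{u}_h^{n,i-1}$, the same two algebraic identities, the same bound $\|\ddiv(\bar{\partial}_t \delta \bm{u}_h^{n,i}-\bar{\partial}_t \delta \bm{u}_h^{n,i-1})\| \le \frac{\alpha}{\frac{2G}{d}+\lambda}\|\bar{\partial}_t \delta p_h^{n,i}-\bar{\partial}_t \delta p_h^{n,i-1}\|$, and the same absorption via $L \ge \frac{\alpha^2}{2(\frac{2G}{d}+\lambda)}$ followed by summation over $n$. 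Your explicit remark that the telescoped $B$-norm boundary term vanishes because $p_h^{0,i}=p_0$ is independent of $i$ (so $\delta p_h^{0,i}=0$) is a point the paper leaves implicit, and your version of the second identity (with the factor $\tfrac12$) is the one actually used in the paper's computation.
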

\begin{proof}
We take the difference of two successive iterates of the mechanic equation \eqref{total_discrete_variational_split_displacement}  and test the resulting equation by 
${\bf v}_h = \bar{\partial}_t \delta  {\bf u}_h^{n,i-1}$ to get for $n=1, 2,\ldots, N,$
\begin{equation} \label{test_mech_fully}
2 G ({\boldsymbol \varepsilon}( \bar{\partial}_t  \delta  {\bf u}_
h^{n,i}),{\boldsymbol \varepsilon}( \bar{\partial}_t \delta  {\bf u}_h^{n,i-1}))
+ \lambda (\nabla \cdot \bar{\partial}_t  \delta  {\bf u}_
h^{n,i},\nabla \cdot \bar{\partial}_t  \delta  {\bf u}_
h^{n,i-1}) - \alpha (\bar{\partial}_t  \delta p_h^{n,i},\nabla \cdot \bar{\partial}_t  \delta  {\bf u}_
h^{n,i-1}) = 0.
\end{equation}
By taking the difference between two successive iterates of the flow equation
 \eqref{total_discrete_variational_split_pressure} and testing with $q_h = \bar{\partial}_t  \delta p_h^{n,i}$, we obtain 
\begin{equation} \label{test_flow_fully}
\frac{1}{\beta} \| \bar{\partial}_t  \delta p_h^{n,i} \|^2 + L (\bar{\partial}_t (\delta p_h^{n,i} - \delta p_h^{n,i-1}), \bar{\partial}_t  \delta p_h^{n,i}) +  b( \delta p_h^{n,i},\bar{\partial}_t  \delta p_h^{n,i}) +\alpha (\nabla \cdot \bar{\partial}_t  \delta  {\bf u}_h^{n,i-1},\bar{\partial}_t  \delta p_h^{n,i}) = 0.
\end{equation}
After summing up equations \eqref{test_mech_fully} and \eqref{test_flow_fully}, and using the identities
$$
(\sigma,\xi) = \frac{1}{4} \| \sigma+\xi \|^2-\frac{1}{4} \| \sigma-\xi \|^2, \quad  (\sigma-\xi,\sigma) = \|\sigma\|^2 - \| \xi \|^2+\| \sigma-\xi \|^2,
$$
one has
\begin{eqnarray}
& & \frac{G}{2} \| {\boldsymbol \varepsilon} ( \bar{\partial}_t  \delta  {\bf u}_h^{n,i} + \bar{\partial}_t \delta  {\bf u}_h^{n,i-1}) \|^2 + \frac{\lambda}{4} \| \nabla \cdot ( \bar{\partial}_t  \delta  {\bf u}_h^{n,i} + \bar{\partial}_t \delta  {\bf u}_h^{n,i-1}) \|^2 + \frac{1}{\beta} \| \bar{\partial}_t  \delta p_h^{n,i} \|^2    \nonumber  \\
& & + \frac{L}{2} (\| \bar{\partial}_t  \delta p_h^{n,i} \|^2  
+ \| \bar{\partial}_t  \delta p_h^{n,i} -  \bar{\partial}_t  \delta p_h^{n,i-1}\|^2) 
+ \frac{1}{2\tau} (\| \delta p_h^{n,i} \|_B^2  + \| \delta p_h^{n,i} - \delta p_h^{n-1,i} \|_B^2)  
\nonumber \\
& & = \frac{G}{2} \| {\boldsymbol \varepsilon} ( \bar{\partial}_t  \delta  {\bf u}_h^{n,i} - \bar{\partial}_t  \delta  {\bf u}_h^{n,i-1}) \|^2 + \frac{\lambda}{4} \| \nabla \cdot ( \bar{\partial}_t  \delta  {\bf u}_h^{n,i} - \bar{\partial}_t  \delta  {\bf u}_h^{n,i-1}) \|^2 + \frac{L}{2} \| \bar{\partial}_t  \delta p_h^{n,i-1} \|^2  \nonumber \\
& & + \frac{1}{2\tau}  \| \delta p_h^{n-1,i} \|_B^2. \label{intermediate_formula_fully}
\end{eqnarray}
Next, we consider the difference of two successive iterates of the mechanic equation \eqref{total_discrete_variational_split_displacement}  and test  by 
${\bf v}_h =  \bar{\partial}_t \delta  {\bf u}_h^{n,i} -  \bar{\partial}_t \delta  {\bf u}_h^{n,i-1}$ to get
\begin{equation}\label{equality_2_fully}
2 G \| {\boldsymbol \varepsilon}( \bar{\partial}_t \delta  {\bf u}_
h^{n,i}- \bar{\partial}_t \delta  {\bf u}_
h^{n,i-1}) \|^2 + \lambda \| \nabla \cdot ( \bar{\partial}_t \delta  {\bf u}_
h^{n,i}- \bar{\partial}_t \delta  {\bf u}_
h^{n,i}) \|^2 = \alpha ( \bar{\partial}_t \delta p_h^{n,i}-  \bar{\partial}_t \delta p_h^{n,i-1},\nabla \cdot ( \bar{\partial}_t \delta  {\bf u}_h^{n,i} - \bar{\partial}_t \delta  {\bf u}_h^{n,i-1})).
\end{equation}
From this equality, by applying Cauchy-Schwarz inequality, it is easy to see 
\begin{equation}
\label{ineq_div_pressure_fully}
 \| \nabla \cdot (  \bar{\partial}_t \delta  {\bf u}_h^{n,i} -  \bar{\partial}_t \delta  {\bf u}_h^{n,i-1}) \| \leq
\frac{\alpha}{\frac{2G}{d}+\lambda} \|  \bar{\partial}_t \delta p_h^{n,i} -   \bar{\partial}_t \delta p_h^{n,i-1}\|.
\end{equation}
Inserting equality \eqref{equality_2_fully} into equation  \eqref{intermediate_formula_fully} and by applying  Cauchy-Schwarz  inequality and \eqref{ineq_div_pressure_fully}, we obtain 
 \begin{eqnarray}
& & \frac{G}{2} \| {\boldsymbol \varepsilon} ( \partial_t \delta  {\bf u}_h^{n,i} + \partial_t \delta  {\bf u}_h^{n,i-1}) \|^2 + \frac{\lambda}{4} \| \nabla \cdot ( \partial_t \delta  {\bf u}_h^{n,i} + \partial_t \delta  {\bf u}_h^{n,i-1}) \|^2 + \frac{1}{\beta} \| \partial_t \delta p_h^{n,i} \|^2  \nonumber  \\
& &  + \frac{L}{2}(\| \partial_t \delta p_h^{n,i} \|^2 
+ \| \partial_t \delta p_h^{n,i} -  \partial_t \delta p_h^{n,i-1}\|^2) + \frac{1}{2\tau} (\| \delta p_h^{n,i} \|_B^2  + \| \delta p_h^{n,i} - \delta p_h^{n-1,i} \|_B^2)  \nonumber \\
& & \leq \frac{L}{2}  \| \partial_t \delta p_h^{n,i-1} \|^2   + \frac{1}{2\tau}  \| \delta p_h^{n-1,i} \|_B^2 +
\frac{\alpha^2}{4(\frac{2G}{d}+\lambda)} \| \partial_t \delta p_h^{n,i} -  \partial_t \delta p_h^{n,i-1}\|^2. \nonumber
\end{eqnarray}
Discarding positive terms, taking $L \ge \frac{\alpha^2}{2(\frac{2G}{d}+\lambda)}$, and summing up from $n=1$ to $N$, we finally obtain \eqref{thm_contraction_fullydiscrete}. This implies that the scheme is a contraction and therefore convergent. This completes the proof.
\end{proof}
\begin{remark}
Notice that the values of parameter $L$ result to be the same as in the classical fixed-stress split 
scheme.
\end{remark}

%%%%%%%%%%%%%%%%%%%%%%%%%%%%%%%%%%%%%%%%%%

\section{Numerical experiment}\label{sec:numerical}

In this section, we present a numerical experiment with the purpose of illustrating the performance of the 
parallel-in-time fixed stress splitting (PFS) method described in Section \ref{sec:parallel_fixed}. We compare the PFS method with the classical fixed stress splitting (FS), see e.g. \cite{Both2017}.  As test problem, we use Mandel's problem, which is a well-established 2D benchmark problem with a known analytical solution  \cite{abousleiman1996mandel,Mandel}. This problem is very often used in the community for verifying  the implementation and the performance of the numerical schemes, see e.g. \cite{phillips1,Kim2009,RGHZ2016,Wan_2015}.  We implemented the problem in the open-source software package deal.II \cite{DealII}.

Mandel's problem consists in a poroelastic slab of extent $2a$ in the $x$ direction, $2b$ in the $y$ direction, and infinitely long in the z-direction, and is sandwiched between two rigid impermeable plates (see Figure \ref{xMandelProblem}). At time $t=0$, a uniform vertical load of magnitude $2F$ is applied and an equal, but upward force is applied to the bottom plate. This load is supposed to remain constant. The domain is free to drain and stress-free at $x=\pm a$. Gravity is neglected.

For the numerical solution, the symmetry of the problem allows us to use a quarter of the physical domain as computational domain (see Figure \ref{tMandel2Problem2}). Moreover, the rigid plate condition is enforced by adding constrained equations so that vertical displacement $u_y(b,t)$ on the top is equal to a known constant value.
	
\begin{figure}[h!]
    \centering
    \begin{subfigure}{0.5\textwidth}
        \centering
        		\includegraphics[scale=0.6]{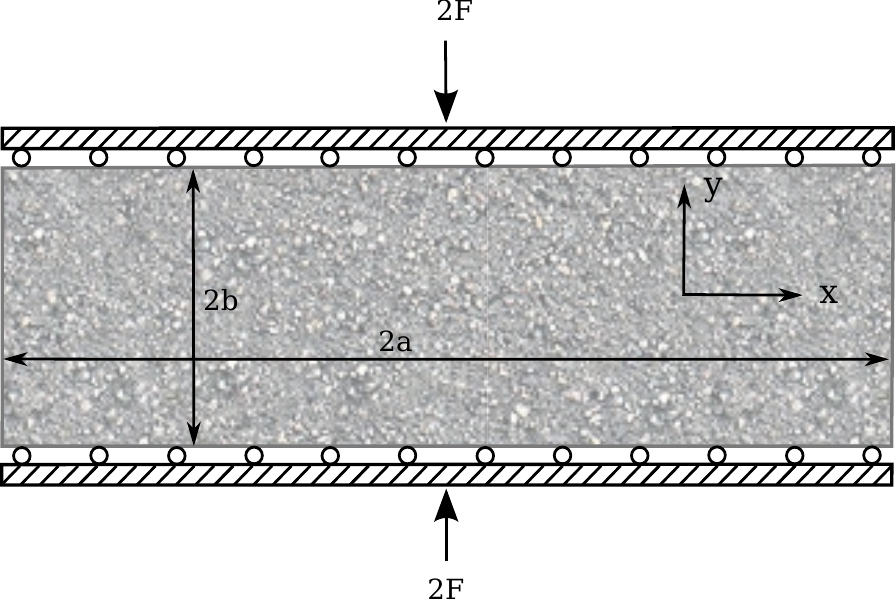} 
        \caption{Mandel's problem physical domain.}
                    \label{xMandelProblem}	
    \end{subfigure}%
    ~ 
    \begin{subfigure}{0.5\textwidth}
        \centering
     		\includegraphics[scale=0.6]{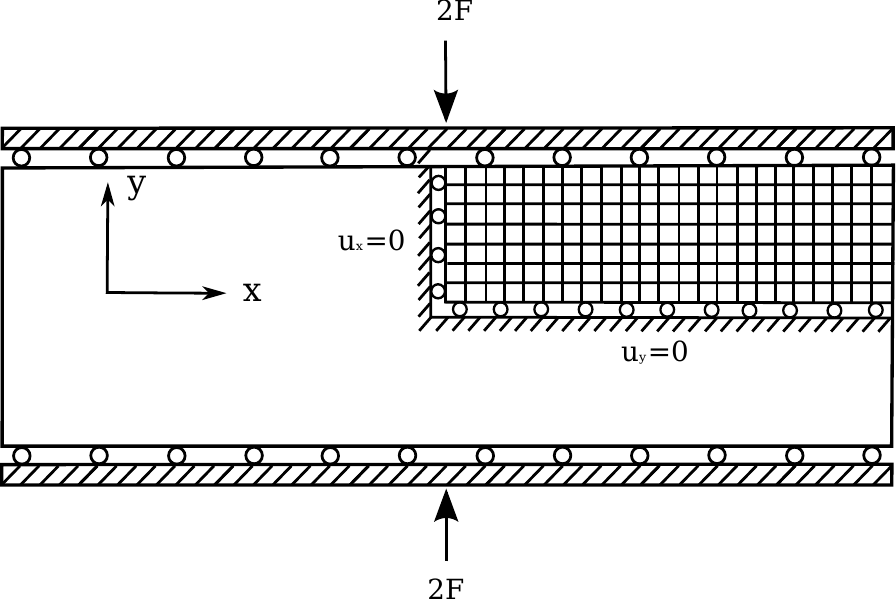} 
        \caption{Mandel's problem quarter domain}
                \label{tMandel2Problem2}
    \end{subfigure}
    \caption{Mandel's problem}
                \label{MandelProblem}	
\end{figure}	
%%\FloatBarrier

The application of a load (2F) causes an instantaneous and uniform pressure increase throughout the domain \cite{GayX}; this is predicted theoretically 
\cite{abousleiman1996mandel} and it can be used as an initial condition
\begin{align}
p(x,y,0)&=\frac{F B(1+v_u)}{3a}, \nonumber\\
\uu(x,y,0)&=   \begin{pmatrix} \displaystyle \frac{F v_u x}{2G},  &  \displaystyle \frac{-F b(1-v_u)y}{2G a} \end{pmatrix}^\top,\nonumber
\end{align} 
being $B$ the Skempton's coefficient, that for our problem is $B=0.8333$, and 
$\nu_u = \frac{3\nu + B(1-2 \nu)}{3-B(1-2\nu)}$ the undrained Poisson's ratio.

The boundary conditions are specified in Table \ref{BoundaryCondition} and  the input parameters for Mandel's problem are listed in Table \ref{Parameter}.
For all cases, we use as convergence criterion for the schemes
$10^{-8} \norm{\delta p^{n,i}} +10^2\norm{\delta {\uu}^{n,i}} \leq 10^{-8}$, due to the different orders of magnitude among the primary variables.
	 \begin{table}[h!]
\centering
\caption{Boundary conditions for Mandel's problem}
   \resizebox{.55\textwidth}{!}{
  \begin{tabular}{ l  l  l }
    \hline
    Boundary & Flow & Mechanics \\ 
    \specialrule{.1em}{.05em}{.05em} 
    $x=0$ & $ \qq \cdot {\bf n}= 0$ & $\uu \cdot  {\bf n} = 0$ \\
    $y=0$ & $ \qq \cdot {\bf n}= 0$ & $\uu \cdot {\bf n} = 0$ \\
    $x=a$ & $p=0$                  & $  {\bf \sigma} \cdot  {\bf n} = 0$ \\
    $y=b$ & $ \qq \cdot {\bf n} = 0$ & $ {\bf \sigma}_{12}=0$; $\uu \cdot {\bf n}  = u_y(b,t)$  \\
\hline
  \end{tabular}}
  \label{BoundaryCondition}
	\end{table}  
% %	\FloatBarrier

%
%\begin{table}[h!]
%\centering
%\caption{Input parameter for Mandel's problem}
%  \begin{tabular}{ l  l  l }
%    \hline
%    Symbol & Quantity & Value \\ 
%    \specialrule{.1em}{.05em}{.05em} 
%    a & Dimension in $x$ & 100 m \\
%    b & Dimension in $y$ & 10 m \\
%    k & Permeability  & 100 D \\
%%    $\mu_f$ & Dynamic viscosity  & 10 cp \\
%   $\nu_f$ & Kinematic viscosity  & 0.01 $m^2/s$ \\
%    $\alpha$ &  Biot's constant  & 1.0 \\
%    $\beta$ & Biot's modulus & $1.65\times 10^{10}$ Pa \\
%        $\nu$& Poisson's ratio & $0.4$  \\
%    $G$& Lame coefficients & $2.4750\times 10^{9}$  \\
%    $\lambda$& Lame coefficients & $ 1.6500\times 10^{9}$  \\
%    $\Delta x$& Grid spacing  in $x$ & 2.5 m \\
%    $\Delta y$& Grid spacing  in $y$ & 0.25 m\\ 
%    $\Delta t$& Time step & 1 s\\ 
%    $t_T$& Total simulation time  & 500 s\\ 
%    \hline
%  \end{tabular}
%  \label{Parameter}
%	\end{table}

	\begin{table}[h!]
\centering
\caption{Input parameter for Mandel's problem}
 \label{Parameter}
  \resizebox{.95\textwidth}{!}{
  \begin{tabular}{ l  l c | llc}
    \hline
    Symbol & Quantity & Value &     Symbol & Quantity & Value \\ 
    \specialrule{.1em}{.05em}{.05em} 
    a & Dimension in $x$ & 100 m&    b & Dimension in $y$ & 10 m  \\
    K & Permeability  & 100 D &   $\mu_f$ & Dynamic viscosity  & 10 $cp$  \\
%    $\mu_f$ & Dynamic viscosity  & 10 cp \\
    $\alpha$ &  Biot's constant  & 1.0 &    $\beta$ & Biot's modulus & $1.65\times 10^{10}$ Pa \\
        $\nu$& Poisson's ratio & $0.4$ &  $E$& Young's modulus & $ 5.94\times 10^9 Pa$  \\
    $B$& Skempton coefficient & $0.83333$ &  $\nu_u$& Undrained Poisson's ratio & $ 0.44$   \\
    $c$ &Diffusivity coefficient& 46.526 $m^2/s$ &
    $F$ & Force intensity & $6.8 \times 10^8 N/m$\\
    $h_x$& Grid spacing  in $x$ & 2.5 m &    $h_y$& Grid spacing  in $y$ & 0.25 m\\ 
    $\tau$& Time step & 1 s &     $T$& Total simulation time  & 32 s\\ 
    \hline
  \end{tabular}
 }
	\end{table}

\begin{figure}[t]
    \centering
    \begin{subfigure}{0.5\textwidth}
        \centering
        \includegraphics[scale=.3]{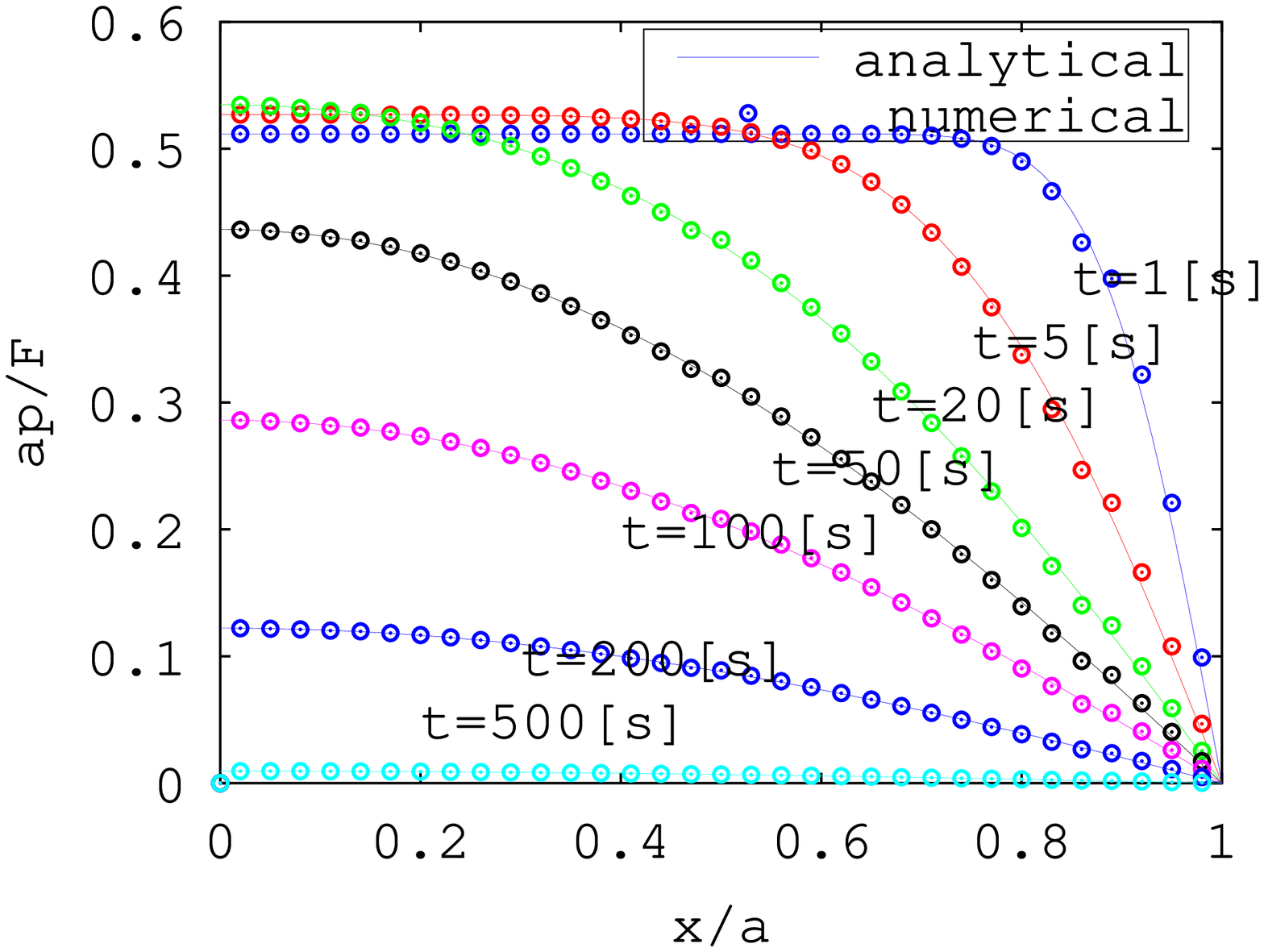} 
        \caption{Pressure solutions}
                \label{solutiona}
    \end{subfigure}%
    ~ 
    \begin{subfigure}{0.5\textwidth}
        \centering
      \includegraphics[scale=0.3]{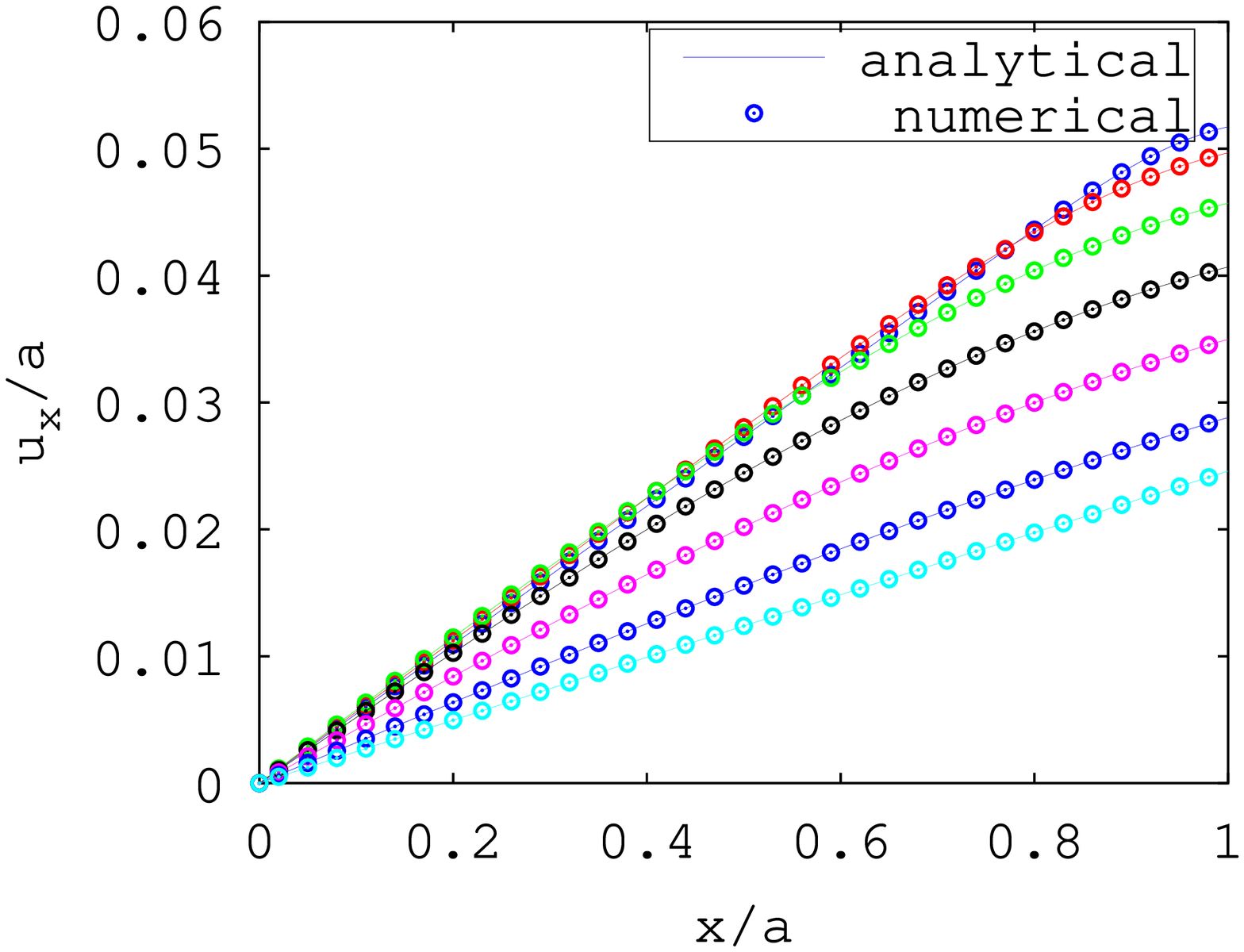}
        \caption{Displacement solutions}
        \label{solutionb}
    \end{subfigure}
    \caption{Comparison of numerical and analytical solutions of the (a) pore pressure and
    (b) displacements for Mandel's problem in different times with $\nu = 0.2$.}
            \label{solution}
\end{figure}

In Figure \ref{solution}, the numerical  and the analytical solutions of Mandel's problem are depicted for different values of time. There is a very good match between both solutions for all cases. Moreover, the results demonstrate the Mandel-Cryer effect, first showing a pressure raise during the first 20 seconds and then, a sudden dissipation throughout the domain.

%\begin{center}
%\begin{table}
%
%\caption{Iterative schemes performances for different values of $\nu$}
%\label{table_nu}
%  \begin{tabular}{ lccccc }
%   % \hline
%   $\nu$  & 0.40000 & 0.49000& 0.49900& 0.49990& 0.49999 \\ \hline
%    Fixed-stress  & 8.1 & 5.1 &4.0&3.0&3.0\\ 
%    Parallel-fixed stress  & 9.0 & 6.0&4.0&3.0&3.0 \\
%    \hline
%  \end{tabular}
%\end{table}
%\end{center}
%
%Table \ref{table_nu} shows the average number of iterations between the fixed stress method and the parallel version. Both method has similar performance for different values of $\nu$

The number of iterations for the parallel fixed stress splitting method and  the classical fixed stress splitting method are reported in Figure \ref{Test_Iterations} for different values of parameter $L$ and various values of
$\nu$.  We remark a very similar behaviour of the two methods, with the optimal stabilization parameter $L$ being in this case the physical one $L_{phy} := \alpha^2 / \left(\frac{2G}{d}+\lambda\right)$, see e.g. \cite{Kim2009, Mikelic, Both2017}. 
\begin{figure}[h!]
    \centering
        		\includegraphics[scale=0.35]{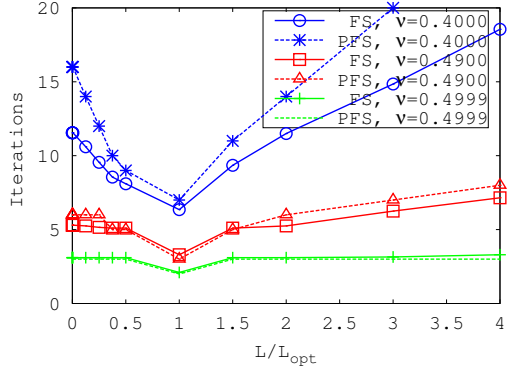} 
        \caption{Performance of the splitting schemes PFS and FS for different values of $L$, $\tau = 1 [s]$, $h = 0.25[m]$. Both schemes has the same optimum value $L_{opt} = L_{phy}$. }
                    \label{Test_Iterations}	
\end{figure}	
%\begin{center}

%\end{center}

Further, in  Figure \ref{Timing} the CPU time reduction for the parallel splitting (PFS) method compared with the classical fixed stress (FS) method is reported in percentage for different values of the Poisson's ratio (see Figure    \ref{Timingcomparison}) and for different time steps (see Figure \ref{Timingcomparison_dt}). The CPU time reduction is estimated by the ratio between the CPU time of PFS and the CPU time of FS. We use parallelism on shared memory machines. In this regard, PFS is set to use one processor to solve the flow problem and up to eight processors in parallel to solve the mechanics, while FS  is set to use one processor to solve flow and mechanics.
For the smallest Poisson's ration ($\nu = 0.4$), PFS is $1\%$ slower than the FS when one processor is used. However, PFS is $10\%$ faster when four processors are used. Still, we notice that the CPU time reduction is limited to the CPU time of the flow problem. In Table \ref{CPU_time_comsumtion} we  report the percentages of CPU which are devoted for solving the flow and the mechanic problems  for different Poisson's ratio values $\nu$. As expected, we clearly observe that when solving the mechanics requires more time, the PFS method becomes more efficient. A reduction of CPU time up to 50\%  (for $\nu \rightarrow 0.5$) is observed (see Figure \ref{Timingcomparison}). Nevertheless, due to the relative small size of the problem, by increasing the number of processors  to more than four we do not see a further significant reduction of the CPU time. This is due to the time lost in the communication between the processors. Finally, we remark that the mesh size and the time step $\tau$ does not influence the number of iterations. This can be seen in Table \ref{DT_iterations}, where we provide the number of iterations for both algorithms, varying the space and time discretization parameters. However,  Figure \ref{Timingcomparison_dt} shows  that the CPU time of PFS slightly improves for smaller time steps $\tau$. This is in agreement with the theory.

\begin{figure}[h!]%\label{Fig:comparison:CPU}
    \centering
    \begin{subfigure}{0.5\textwidth}
        \centering
        		\includegraphics[scale=0.3]{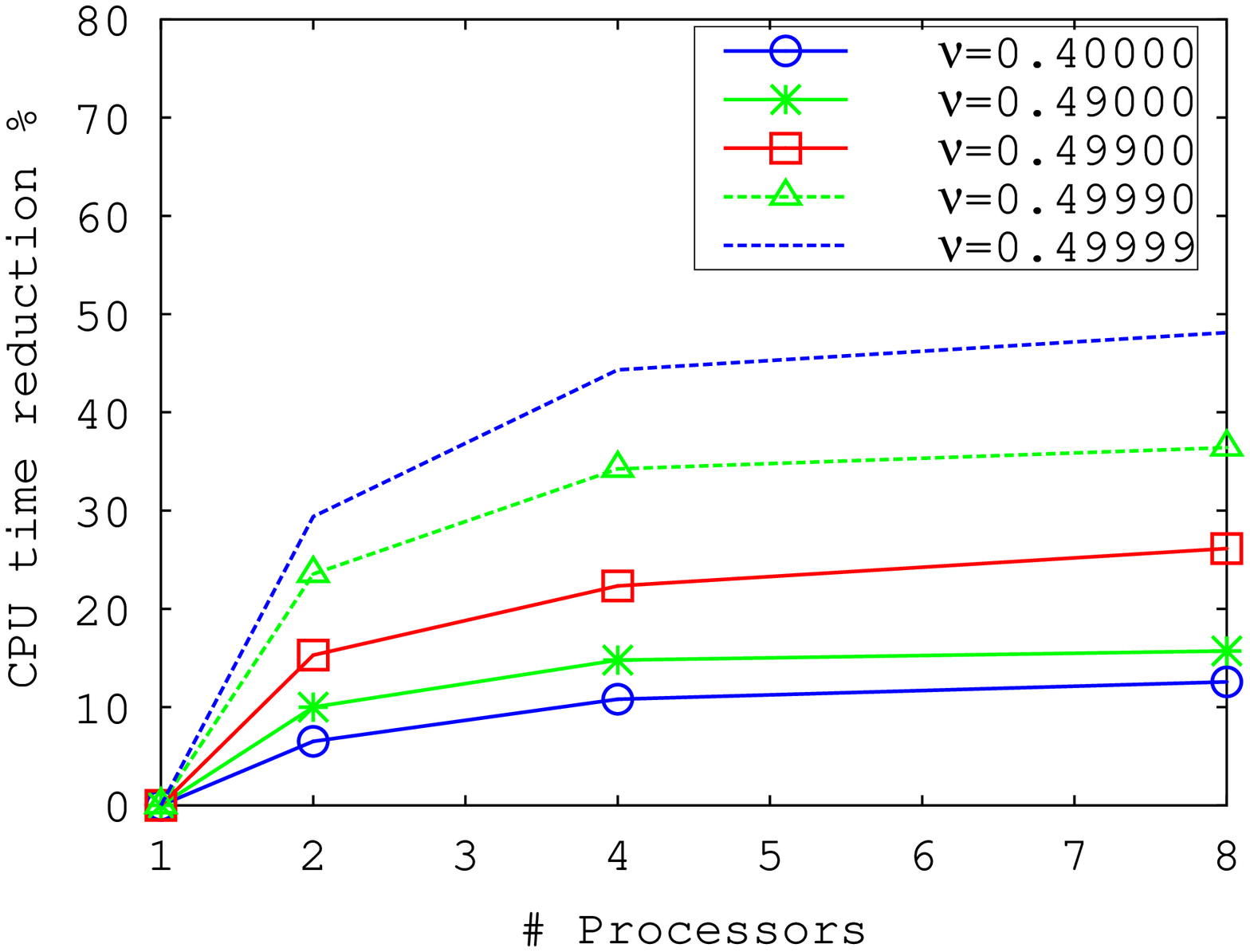} 
        \caption{CPU time reduction for different values of Poisson's ratio $\nu$; $\tau = 1 [s]$, $h = 0.25[m]$. } 
                    \label{Timingcomparison}	
    \end{subfigure}%
    ~ 
    \begin{subfigure}{0.5\textwidth}
        \centering
     		\includegraphics[scale=0.3]{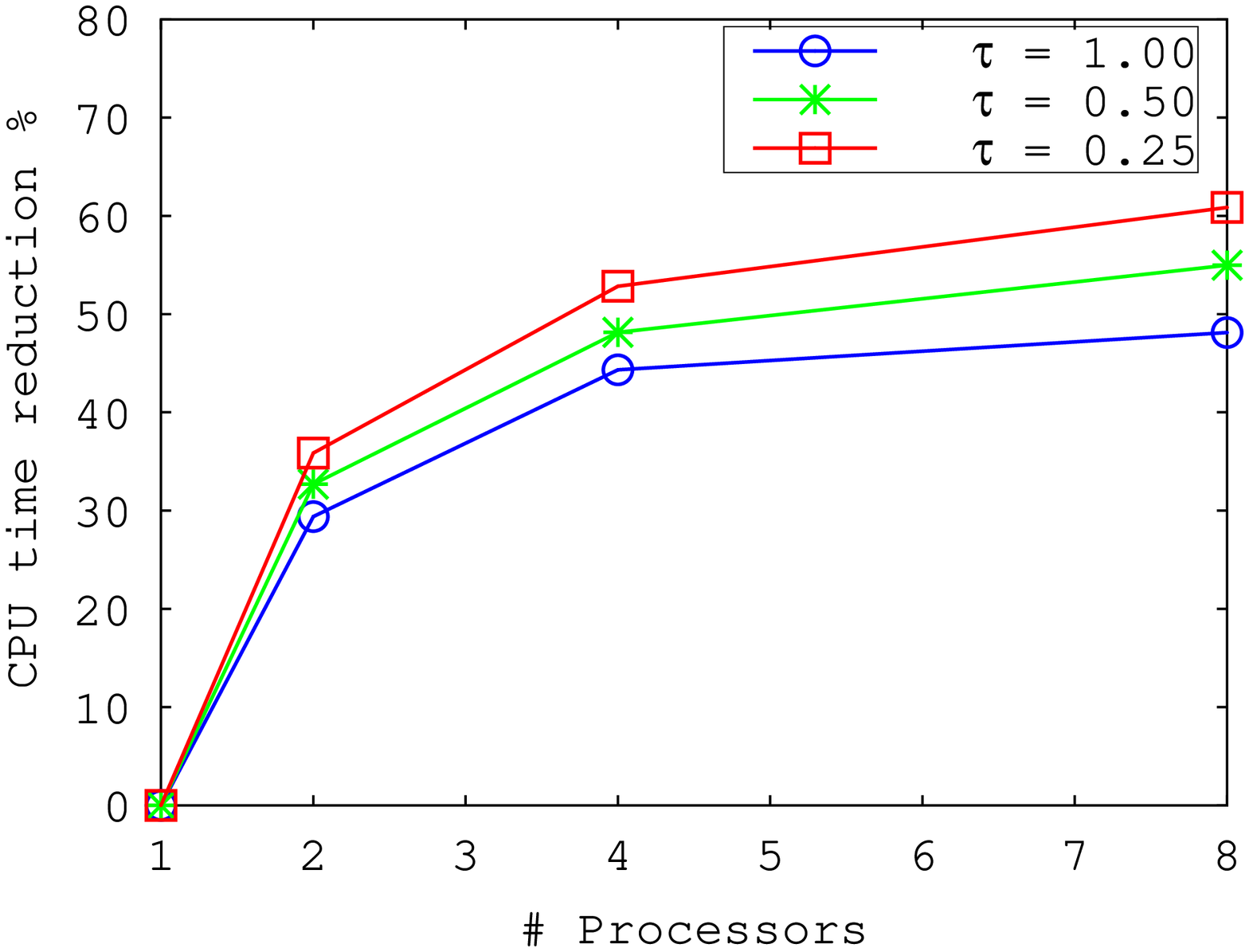} 
        \caption{CPU time reduction for different time step sizes; $\nu =0.49999$, $h = 0.25[m]$. }
           \label{Timingcomparison_dt}	     
    \end{subfigure}
    \caption{Parallel fixed stress splitting CPU times reduction. }
                \label{Timing}	
\end{figure}

 %In Figure \ref{Timingcomparison} we reported the CPU time reduction for different values of $\nu$ and many parallel processors (between 1 and 8 processor). The CPU time reduction is up to $50\%$ for incompressible material ($\nu \rightarrow 0.5$). 
 %In contrast, figure \ref{Timingcomparison} does not show big change on CPU time reduction when  8 and 32 processor are used. 
%The mesh size $h$ and the time step $\tau$ does not influence the number of iteration (Table \ref{DT_iterations}). However,  Figure \ref{Timingcomparison_dt} shows the CPU time of PFSS  improved for smaller time steps $\tau$.

% 
%\begin{figure}[h!]
%    \centering
%    \begin{subfigure}{0.5\textwidth}
%        		\includegraphics[scale=0.2]{Figures/Method_Comparison_timing.eps} 
%        \caption{CPT time reduction $\tau = 1 [s]$, $h = 0.25[m]$. }
%                    \label{Timingcomparison_dt}	
%\end{subfigure}	
%~
%\begin{subfigure}[h!]
%        		\includegraphics[scale=0.2]{Figures/Method_Comparison_timing_dt.eps} 
%        \caption{CPT time reduction $\tau = 1 [s]$, $h = 0.25[m]$. }
%        \end{subfigure}	
%                    \label{Timingcomparison}	
%\end{figure}	

\begin{table}[htb]
\centering
\caption{CPU time consumption}
\label{CPU_time_comsumtion}
  \resizebox{.55\textwidth}{!}{
  \begin{tabular}{ lcc}
   % \hline
   $\nu$  & Flow problem & Mechanic problem \\
    \hline
0.4 &	$82\%$&$18\%$\\
0.49&	$76\%$&$24\%$\\
0.499&	$68\%$&$32\%$\\
0.4999&	$47\%$&$53\%$\\
0.49999&	$34\%$&$66\%$\\
    \hline
  \end{tabular}
  }
\end{table}

%\begin{center}
\begin{table}[htb]
\centering
\caption{Number of iterations for different values of $\tau$,$h$, $\nu$}
\label{DT_iterations}
  \resizebox{.65\textwidth}{!}{ 
  \begin{tabular}{ lcc}
  $\nu = 0.49999$\\
    \hline
   $\tau [s]$  & PFS & FS \\
    \hline
1.000 &	2&2.10\\
0.500&	2&2.03\\
0.250&	2&2.02\\
0.125&	2&2.01\\
    \hline
  \end{tabular}  
    \begin{tabular}{ lcc}
    $\nu = 0.499$\\
    \hline
   $h[m]$  & PFS & FS \\
    \hline
    0.5000 &	3&3.20\\
0.2500 &	3&3.20\\
0.1250 &	3&3.19\\
0.0625& 3&3.19\\
    \hline
  \end{tabular}}
\end{table}
%\end{center}

%%%%%%%%%%%%%%%%%%%%%%%%%%%%%%%%%%%%%%%%%%
\section{Conclusions} \label{sec:conclusion}

We considered the quasi-static Biot's model in the two-field formulation and presented a new fixed stress type splitting method for solving it. The main benefit of the new method is that the mechanics can be solved in a parallel-in-time manner, taking advantage of the current massively parallel computers. We have rigorously analyzed the convergence of the method. If the stabilization term $L$ is chosen big enough, the method is shown to be convergent. The theoretical results are indicating a similar behaviour with the classical fixed stress splitting method (in terms of convergence rate and stabilization parameter). We further performed numerical tests by using the Mandel benchmark problem. The numerical results are confirming a similar behaviour of the parallel fixed stress and the classical fixed stress method in terms of number of iterations and optimal stabilization parameter. With respect to the CPU time, we observe that the new scheme is very efficient (up to 50\% reduction of the CPU time) for problems where solving the mechanics requires more time than solving the flow (in the case of Mandel's problem for Poisson's ratio close to five). Nevertheless, the parallel implementation has still to be optimized. Up to date, only relatively small problems could be solved and therefore the new parallel method remained efficient only for not too many processors (less than eight).

%Bullet points from the numeric section
%\begin{itemize}
%\item Parallel fixed stress splitting (PFSS) has the same performance than fixed stress splitting (FSS). 

%\item PFSS is as robust as the FSS.

%\item Parallel fixed stress splitting is suitable for incompressible materials.

%\item Future implementation should be focus on using the proper data structure, i.e Trilinos or PETSc, to compare PFSS with a parallel implementation of FSS.

%\end{itemize}
%%%%%%%%%%%%%%%%%%%%%%%%%%%%%%%%%%%%%%%%%%
{\bf Acknowledgements} The work of F.~A. Radu and K. Kumar was partially supported by the NFR - Toppforsk project TheMSES \#250223. The work of F.~J. Gaspar is supported by the European Union's
Horizon 2020 research and innovation programme under the Marie
Sklodowska-Curie grant agreement NO 705402, POROSOS. The research of
C.~Rodrigo is supported in part by the Spanish project FEDER /MCYT
MTM2016-75139-R and the DGA (Grupo consolidado PDIE).

%\section*{References}

\bibliography{mybibfile_ACOMEN}

\end{document}